\newtheorem{Thm}[equation]{Theorem}
\newtheorem{Lem}[equation]{Lemma}
\theoremstyle{remark}
\newtheorem*{Rem*}{Remark}
\theoremstyle{definition}
\newtheorem{Property}[equation]{Property}
\newtheorem*{Not*}{Notation}
\numberwithin{equation}{section}
\newcommand{\lt}{\mathord{<}}
\newcommand{\Lt}{\mathord{\prec}}
\begin{document}

\date{%
Sat Sep  5 10:26:59 CEST 2009}

\title[The number of left orders of a group]
{The space of left orders of a group is either finite or uncountable}

\author[P. A. Linnell]{Peter A. Linnell}
\address{Department of Mathematics \\
Virginia Tech \\
Blacksburg \\
VA 24061-0123 \\
USA}
\email{plinnell@math.vt.edu}
\urladdr{http://www.math.vt.edu/people/plinnell/}

\begin{abstract}
Let $G$ be a group and let $\mathcal{O}_G$ denote the set of left
orderings on $G$.  Then $\mathcal{O}_G$ can be topologized in a
natural way, and we shall study this topology to show that
$\mathcal{O}_G$ can never be countably infinite.
\end{abstract}

\keywords{left-ordered group, Cantor set}

\subjclass[2000]{Primary: 20F60; Secondary: 06F15}
\maketitle

\section{Introduction}

A group $G$ is left-ordered means that there is a total order $<$ on
the group $G$ which is left invariant.  Thus we have for all $g,x,y
\in G$ with $x \ne y$:
\begin{itemize}
\item
Either $x < y$ or $y < x$, but never both.
\item
If $g < x < y$, then $g < y$.
\item
If $x < y$, then $gx < gy$.
\end{itemize}

In this situation, we shall write $(G,<)$ to mean
the left-ordered group $G$ together with the given left order.  Of
course in general, there are many left orders on a left-ordered
group.  However when $G$ is finite, $G$ has no left orders except in
the case $G = 1$, and then $G$ has exactly one left order.

We study the space $\mathcal{O}_G$ of all left orders on the
left-ordered group $G$.  Throughout this paper we let
$\mathbb{N} = \{1,2,\dots\}$, the positive integers, and for $a,g
\in G$ we let $a^g = gag^{-1}$.
The topology on $\mathcal{O}_G$ is given by a base
of open sets of the form
\[
U_{g_1,\dots,g_n} := \{\lt \in \mathcal{O}_G
\mid g_1 < \dots < g_n\}
\]
where $g_i \in G$ and $n \in \mathbb{N}$.
Another way to describe this topology is that it is given by the
subbase $\{U_{1,g} \mid g \in G\setminus 1\}$.
Of course to check that a map is continuous, we need only
check that the inverse image of each element of
the subbase is open, in particular if $f \colon X \to \mathcal{O}_G$
is a map from a topological space $X$, then $f$ is continuous if and
only if $f^{-1} (U_{1,g})$ is open in $X$
for all $g \in G\setminus 1$.

For $g_1, \dots, g_n \in G$, set $V_{g_1,\dots,g_n} = U_{1,g_1} \cap
\dots \cap U_{1,g_n}$.  Then the sets $V_{g_1,\dots,g_n}$
($0 \le n \in \mathbb{Z}$) form a base
for the topology on $\mathcal{O}_G$.  Sometimes we will write
$V(G)_{g_1, \dots,g_n}$ for $V_{g_1,\dots,g_n}$ if it not clear with
which group we are working with.  An important property of
$\mathcal{O}_G$ is that it is a totally disconnected
compact Hausdorff space \cite[Theorem 1.4]{Sikora04}.  In the case
$G$ is countable, $\mathcal{O}_G$ is metrizable.  Also there is a
natural right $G$-action on $\mathcal{O}_G$ by homeomorphisms,
where for $g \in G$ and $\lt \in \mathcal{O}_G$, we define
\begin{equation} \label{Eaction}
x <_g y \Longleftrightarrow x^g < y^g
\end{equation}
for all $x,y \in G$.  Thus $\lt_{gh} = (\lt_g)_h$ for all $g,h \in G$.

Recently Dave Witte Morris \cite{Witte06} gave a fabulous proof that
a left-ordered amenable group is locally indicable.  His method used
the space $\mathcal{O}_G$ and some elementary ergodic theory.  It
would seem that $\mathcal{O}_G$ is worthy of further investigation.

If $(G,<)$ is a left-ordered group, then the positive cone of
$G$ (relative to $<$) is $P := \{g \in G \mid g>1\}$.
Then $P$ satisfies the following:
\begin{Property} \label{Pcone}
\begin{enumerate}[\normalfont(a)]
\item \label{Pconea}
If $g,h \in P$, then $gh \in P$.
\item \label{Pconeb}
If $g \in P$, then $g^{-1} \notin P$.
\item \label{Pconec}
If $1 \ne g \in G$, then either $g$ or $g^{-1}$ is in $P$.
\end{enumerate}
\end{Property}
Conversely given a subset $P$ of $G$ satisfying
\eqref{Pconea}, \eqref{Pconeb} and \eqref{Pconec} above,
one can define a left order $<$ on $G$ by $h < g \Leftrightarrow
h^{-1}g \in P$.  Also one can use $P$ to give $G$ a right order,
that is a total order $\prec$ which is right
invariant, by defining $h \prec g$ if and only if
$gh^{-1} \in P$.  From this it is easy to see that a group is
left-ordered if and only if it is right-ordered.  For convenience,
we only consider left-ordered groups.  If
$\lt_g = \lt$ for all $g \in G$, then $G$ is a bi-ordered group.

The purpose of this paper is to answer
\cite[Problem 16.51]{MazurovKhukhro06} in the negative.
The problem asks ``do there exist groups that can be right-ordered in
infinitely countably many ways?"  Considerable progress on this
problem was made by A.~V.~Zenkov in \cite{Zenkov97}, where he proved
that the number of right orders on a locally indicable group is
either finite or uncountable; see
\cite[Theorem 5.2.5]{KopytovMedvedev96}.  We shall prove
\begin{Thm} \label{Tuncountable}
There is no group which can be left-ordered in a countably infinite
number of ways.
\end{Thm}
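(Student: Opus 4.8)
The backbone of the argument is purely topological. I would first record the standard fact that a nonempty compact Hausdorff space with no isolated points is uncountable: in such a space every singleton is closed and nowhere dense, so if the space were countable it would be a countable union of nowhere dense closed sets, contradicting the Baire category theorem (which applies since compact Hausdorff spaces are Baire). Because $\mathcal{O}_G$ is compact Hausdorff \cite[Theorem 1.4]{Sikora04}, this disposes of the case in which $\mathcal{O}_G$ has no isolated points. It therefore suffices to assume that $\mathcal{O}_G$ is countably infinite and to derive a contradiction. Note that a countable compact Hausdorff space is automatically scattered (a nonempty perfect subset would be uncountable), so the Cantor--Bendixson derivatives $\mathcal{O}_G^{(\beta)}$ descend to $\emptyset$, and there is a least ordinal $\alpha$ with $T := \mathcal{O}_G^{(\alpha)}$ finite and nonempty.

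Next I would bring in the $G$-action from \eqref{Eaction}. Since each $g \in G$ acts as a homeomorphism, the action preserves the set of isolated points and, by transfinite induction, every Cantor--Bendixson derivative; in particular $T$ is a finite $G$-invariant set. A cleaner way to package this is via minimal sets: by Zorn's lemma and compactness there is a minimal nonempty closed $G$-invariant set $M \subseteq \mathcal{O}_G$, and I claim $M$ is finite. Indeed, if $M$ had a point $p$ isolated in $M$, then, as $G$ acts on $M$ by homeomorphisms, the orbit $Gp$ would be open in $M$; its complement is closed and $G$-invariant, so by minimality $M = Gp$ is discrete, hence finite by compactness. Otherwise $M$ is perfect, hence uncountable, which is impossible inside the countable space $\mathcal{O}_G$. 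Thus $M$ is a finite $G$-orbit, so it contains an order $<$ whose stabilizer $\{g \in G : gPg^{-1} = P\}$ (where $P$ is its positive cone, see Property \ref{Pcone}) has finite index in $G$. Restricting $<$ to this stabilizer $H$ yields, by left-invariance together with $h^{-1}Ph = P$, a bi-order on the finite-index subgroup $H$.

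The \emph{crux}, and the step I expect to be the main obstacle, is to convert the standing hypothesis that $\mathcal{O}_G$ is infinite into an actual copy of the Cantor set, which would contradict countability. My plan here is to use convex subgroups: for a left order $<$ with a $<$-convex subgroup $C$, the order is reconstructed lexicographically from its restriction to $C$ and the induced order on the coset space $G/C$, so any family of orders on $C$ compatible with a fixed quotient order should embed into $\mathcal{O}_G$. The aim is to show that an isolated order whose convex series possesses a ``flexible'' jump forces continuum-many compatible orders, and hence uncountability, whereas rigidity of every jump (each behaving like an Archimedean rank-one piece with only finitely many orders) forces $\mathcal{O}_G$ itself to be finite. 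The difficulty is precisely this rigidity analysis: it must be carried out for arbitrary, possibly uncountable, $G$; it must genuinely use the finite-index normalizer produced above to control the convex structure; and it must rule out the delicate intermediate configurations that the topology and the homeomorphism action alone do not forbid, such as $\mathcal{O}_G$ being a single convergent sequence $<_1, <_2, \dots \to\ <_\infty$ in which $<_\infty$ is the unique non-isolated order and is therefore $G$-fixed, i.e.\ a bi-order.
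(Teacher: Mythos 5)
Your opening reduction is essentially the one the paper uses: a countable compact Hausdorff space is scattered, so some Cantor--Bendixson derivative $\mathcal{O}_G^{(\alpha)}$ is finite and nonempty (the paper's Lemma \ref{Lfinite}); the right $G$-action \eqref{Eaction} by homeomorphisms preserves each derivative, so one obtains a left order $\prec$ on $G$ whose positive cone is normalized by a finite-index (normal) subgroup $H$, and whose restriction $\rho_{G,H}(\Lt)$ is therefore a bi-order on $H$. Your alternative packaging via a minimal closed invariant set is also correct and slightly slicker. Up to this point you are on the paper's track.

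The problem is that the proof stops there. Everything after ``The \emph{crux}'' is a research plan, not an argument: you propose a lexicographic/convex-jump rigidity analysis, and you yourself identify the configuration it would have to exclude (e.g.\ $\mathcal{O}_G$ a convergent sequence whose unique limit is a bi-order) without excluding it. That is precisely the gap. The paper closes it not by a direct convexity analysis but by importing two external results: by \cite[Lemma 2.3 and Theorem 2.4]{RhemtullaRolfsen02}, a bi-order is of Conrad type and the Conrad property passes from the restriction to a finite-index subgroup up to the order on $G$, and by \cite[Theorem 4.1]{RhemtullaRolfsen02} a group admitting an order of Conrad type is locally indicable; then Zenkov's theorem \cite{Zenkov97} (see \cite[Theorem 5.2.5]{KopytovMedvedev96}) says a locally indicable group has either finitely many or uncountably many left orders, giving the contradiction. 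So the missing step in your write-up is exactly the passage from ``$H$ of finite index is bi-orderable compatibly with $\prec$'' to ``$\mathcal{O}_G$ is finite or uncountable''; without the Conrad-type/local-indicability bridge and Zenkov's result (or a complete substitute for them, which is what your sketched jump analysis would have to be), the proof is incomplete.
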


A somewhat different proof of this theorem is
given in \cite[Theorem 3.1]{CNR09}.  Very recently, another proof has
been given in \cite[Proposition 3.9]{Clay09}.
Our proof will use the derived series (see \S \ref{Spreliminaries})
of $\mathcal{O}_G$ to reduce to the case $G$ is locally indicable,
and then to apply Zenkov's result above.
The structure of groups with a nonzero finite number of
left-orders is well-known; this is due to Tararin
\cite[Theorem 5.2.1]{KopytovMedvedev96}.  Further information on this
is given in \cite{Kirk06}.

I am very grateful to Adam S.~Sikora for noticing a bad error
in an earlier version of this paper \cite{Linnell06}; this paper
salvages the correct part of that paper.  I would also like to thank
Andrew Glass and Andr\'es Navas for encouraging me to write this
paper.

\section{Preliminaries} \label{Spreliminaries}

If $H$ is a subgroup of the group $G$,
then any left order on $G$ restricts to a left order on $H$, so we
have a well-defined restriction map $\rho_{G,H} \colon \mathcal{O}_G
\to \mathcal{O}_H$, which is clearly continuous, because
$\rho_{G,H}^{-1}(V(H)_h) = V(G)_h$ for all $h \in H$.

Let $X$ be an arbitrary Hausdorff topological space.  Then the
derived subset $X'$ of $X$ is the subset obtained from $X$ by
removing all its isolated points; equivalently $X'$ is the set of
limit points of $X$.  Then $X'$ is a closed subset of $X$.  Of
course, $X'$ itself can still have isolated points, so for each
ordinal $\alpha$, we define $X^{(\alpha)}$ by transfinite induction
as follows.
\begin{itemize}
\item
$X^{(0)} = X$.
\item
$X^{(\alpha+1)} = (X^{(\alpha)})'$.
\item
$X^{(\alpha)} = \bigcap_{\lambda < \alpha} X^{(\lambda)}$
if $\alpha$ is a limit ordinal.
\end{itemize}
It is clear that the subspaces $X^{(\alpha)}$ form a descending
sequence of closed subspaces of $X$.  For more details, see \cite[\S
8.5]{Semadeni71}; in particular by \cite[Theorem 8.5.2]{Semadeni71},
there is an ordinal $\alpha$ such that $X^{(\alpha+1)} =
X^{(\alpha)}$.  We require the following easy lemma.
\begin{Lem} \label{Lfinite}
Let $X$ be a nonempty countable compact Hausdorff space.  Then there
exists an ordinal $\alpha$ such that $X^{(\alpha)}$ is finite and
nonempty.
\end{Lem}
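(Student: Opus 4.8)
The plan is to prove that the transfinite derived sequence must terminate at a finite nonempty set, exploiting both countability and compactness of $X$.

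First I would invoke the fact, cited in the excerpt, that the descending chain $X \supseteq X' \supseteq X'' \supseteq \dots$ of closed subspaces must stabilize: by \cite[Theorem 8.5.2]{Semadeni71} there is an ordinal $\alpha$ with $X^{(\alpha+1)} = X^{(\alpha)}$. Set $Y = X^{(\alpha)}$, so $Y' = Y$, meaning $Y$ has no isolated points; that is, $Y$ is perfect. Being a closed subspace of the compact Hausdorff space $X$, the set $Y$ is itself compact Hausdorff, and being a subspace of a countable space it is countable. The crux of the argument is then to show that a nonempty perfect compact Hausdorff space cannot be countable, which forces $Y = \emptyset$.

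The main obstacle is establishing this last point, so I would handle it directly rather than citing it. Suppose for contradiction that $Y$ is nonempty and perfect. I would build a binary tree of nested nonempty open sets whose closures are disjoint at each level, using the Hausdorff property together with the absence of isolated points: given a nonempty open set $U$ with $\overline{U} \subseteq Y$, perfectness guarantees $U$ contains at least two distinct points, and Hausdorffness lets me separate them into two nonempty open subsets with disjoint closures contained in $U$. Iterating along all finite binary strings produces a Cantor-scheme of closed sets; by compactness each infinite branch has nonempty intersection (the finite intersection property applies since the closures are nested and nonempty), and distinct branches give distinct points. This yields an injection from $\{0,1\}^{\mathbb{N}}$ into $Y$, so $Y$ is uncountable, contradicting countability. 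Hence $Y = \emptyset$.

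Finally I would back up one step to extract the finite nonempty stratum. Since $X$ is nonempty but $X^{(\alpha)} = \emptyset$, there is a least ordinal $\beta$ with $X^{(\beta)} = \emptyset$; this $\beta$ cannot be a limit ordinal, for otherwise the nested family $\{X^{(\lambda)}\}_{\lambda < \beta}$ of nonempty closed subsets of the compact space $X$ would have nonempty intersection $X^{(\beta)}$ by the finite intersection property, a contradiction. So $\beta = \gamma + 1$ for some $\gamma$, and $X^{(\gamma)}$ is nonempty while $(X^{(\gamma)})' = X^{(\beta)} = \emptyset$. A compact Hausdorff space all of whose points are isolated is discrete and compact, hence finite; therefore $X^{(\gamma)}$ is finite and nonempty, and taking $\alpha = \gamma$ completes the proof.
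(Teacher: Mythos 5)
Your proof is correct, and its second half (extracting the finite nonempty stratum) is identical to the paper's: take the least ordinal $\beta$ with $X^{(\beta)} = \emptyset$, rule out the limit case by the finite intersection property for nested nonempty closed subsets of a compact space, write $\beta = \gamma+1$, and observe that $X^{(\gamma)}$ is a compact Hausdorff space with only isolated points, hence finite. Where you diverge is in how you establish that the derived sequence reaches $\emptyset$ at all. The paper simply cites \cite[Proposition 8.5.7]{Semadeni71} for the existence of a least ordinal $\alpha$ with $X^{(\alpha)} = \emptyset$. You instead cite only the stabilization result \cite[Theorem 8.5.2]{Semadeni71}, note that the stabilized set $Y$ is perfect, and prove from scratch that a nonempty perfect compact Hausdorff space is uncountable via a Cantor scheme (using normality of compact Hausdorff spaces to get the nested closures, and compactness for nonempty branch intersections). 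This makes your argument more self-contained at the cost of a page of standard point-set topology; the content is the same, since the Cantor-scheme argument is essentially the proof behind the cited proposition. Both versions are complete and correct.
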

\begin{proof}
By \cite[Proposition 8.5.7]{Semadeni71}, let $\alpha$ be the least
ordinal such that $X^{(\alpha)} = \emptyset$.  Suppose $\alpha$
is a limit ordinal.  Then $X^{(\alpha)} =
\bigcap_{\lambda < \alpha} X^{(\lambda)}$.  Since the
$X^{(\lambda)}$ form a descending sequence of closed nonempty
subsets of the compact space $X$, we see that $\bigcap_{\lambda <
\alpha} X^{(\lambda)} \ne \emptyset$ and we have a contradiction.
Therefore $\alpha$ must be a successor ordinal and we may write
$\alpha = \beta+1$ for some ordinal $\beta$.  Then $X^{(\beta)}$ is a
nonempty compact Hausdorff space which consists only of isolated
points, because $(X^{(\beta)})' = \emptyset$.  Therefore
$X^{(\beta)}$ is finite and nonempty.
\end{proof}

\section{Proof of Theorem \ref{Tuncountable}}

\begin{proof}[Proof of Theorem \ref{Tuncountable}]
Suppose by way of contradiction that $G$ is a left-ordered group
such that $\mathcal{O}_G$ is countably infinite.  We have a
right $G$-action on $\mathcal{O}_G$ by
homeomorphisms defined by $x <_g y$
if and only if $x^g < y^g$ (see \eqref{Eaction}), and this
will restrict to $G$-actions on $\mathcal{O}_G^{(\alpha)}$ for all
ordinals $\alpha$.  By Lemma \ref{Lfinite}, there is an ordinal
$\beta$ such that $\mathcal{O}_G^{(\beta)}$ is finite
and nonempty, and then there
will be a normal subgroup $H$ of finite index in $G$ which fixes all
the elements of $\mathcal{O}_G^{(\beta)}$.
In particular, there is a
left order $\prec$ on $G$ such that $\Lt_h = \Lt$ for all $h \in
H$.  Clearly $\rho_{G,H}(\Lt)$ is a bi-order on $H$.  Now by
\cite[Lemma 2.3]{RhemtullaRolfsen02}, a bi-ordered group is of
``Conrad type" \cite[Section 2, p.~2570]{RhemtullaRolfsen02}
(cf.~\cite[Lemma 6.6.2(1,3), p.~121]{Glass99})
and \cite[Theorem 2.4]{RhemtullaRolfsen02} states
that if $\rho_{G,H} (\Lt)$ is of Conrad type and $H$ has finite index
in $G$, then $\prec$ is also of Conrad type.  Furthermore
by \cite[Theorem 4.1]{RhemtullaRolfsen02}, a group is
of Conrad type if and only if it is locally indicable.  We conclude
that $G$ is locally indicable, and the result now follows from
\cite[Theorem 5.2.5]{KopytovMedvedev96}.
\end{proof}

\bibliographystyle{plain}

\end{document}